\title{On rational blow-downs in Heegaard-Floer homology}
\author{Maria Michalogiorgaki}
\begin{document}

\maketitle
\newtheorem{theorem}{Theorem}
\newtheorem{proposition}{Proposition}
\newtheorem{corollary}{Corollary}
\newtheorem{lemma}{Lemma}
\newtheorem{remark}{Remark}
\newtheorem{note}{Note}
\newtheorem{definition}{Definition}

\begin{abstract}
Motivated by a result of L.P. Roberts on rational blow-downs in
Heegaard-Floer homology, we study such operations along 3-manifolds
that arise as branched double covers of $S^{3}$ along several
non-alternating, slice knots.
\end{abstract}

\section{Introduction}

In 1993, R. Fintushel and R. Stern introduced the \textit{rational
blow-down} of smooth 4-manifolds, a surgical procedure consisting of
removing the interior of a negative-definite simply connected smooth
4-manifold $C_{p}$ embedded in a closed smooth 4-manifold $X$ and
replacing it with a rational homology ball. They also studied the
effect of this process on both the Donaldson and the Seiberg-Witten
invariants (\cite{FS}). Several years later, J. Park extended their
results to more general configurations $C_{p,q}$, thus defining the
\textit{generalized rational blow-down}, and computed how the
Seiberg-Witten invariants change under this operation (\cite{P2}).
The first technique was used by Park in constructing an exotic
smooth structure on $\mathbb{C}P^{2}\sharp 7
\overline{\mathbb{C}P}^{2}$ (\cite{P}), while the second was applied
by Park, Stipsicz and Szab\'{o} in constructing exotic smooth
structures on $\mathbb{C}P^{2}\sharp 6 \overline{\mathbb{C}P}^{2}$
and $\mathbb{C}P^{2}\sharp 5 \overline{\mathbb{C}P}^{2}$ (\cite{SS}
and \cite{PSS} respectively). More recently, in \cite{SSW} and
\cite{GaS}, D. Gay, A. Stipsicz, Z. Szab\'{o} and J. Wahl extended
further the above procedures, this time along certain
negative-definite plumbing trees, while in \cite{Ro}, L. Roberts
studied the rational blow-down along the branched double cover
$\Sigma(K)$ of $S^{3}$ along alternating, slice knots K in $S^{3}$
and its effect on the Ozsv\'{a}th-Szab\'{o} 4-manifold
invariant.\\

\noindent In this paper, we turn to study the rational blow-down
operation along 3-manifolds that arise as branched double covers of
$S^{3}$ along non-alternating, slice knots. We narrow our attention
down to knots with up to ten crossings and then the knots $8_{20},
9_{46}, 10_{129}, 10_{137}, 10_{140}, 10_{153}$ and $10_{155}$ are
the only ones with the desired properties (see \cite{In} and
\cite{DBN}). First, we study the mirrors of some of these knots,
specifically $\overline{8_{20}}, \overline{9_{46}},
\overline{10_{137}}$ and $\overline{10_{140}}$, which are also
non-alternating and slice.  The branched double covers of $S^{3}$
along these (denoted by $Y_{1},Y_{2},Y_{3}$ and $Y_{4}$
respectively) bound negative-definite 4-manifolds as well as
rational homology balls. The existence of such balls is guaranteed
by the fact that the knots are slice, but we also find explicit
descriptions of them using \cite{CH}. Using the results in
\cite{OS1} and \cite{OS2}, we show that the 3-manifolds $Y_{i}$, $ i
\in \{1,2,3,4\}$, are L-spaces and we then move on to write rational
blow-down formulas along them, applying the results of Roberts in
\cite{Ro}. We note that the 3-manifolds $Y_{i}$, $ i \in
\{1,2,3,4\}$, along which we perform the rational blow-down do not
belong in any of the categories $G_{rat}$, $W$, $N$, $M$, $A$, $B$,
$C$ described in \cite{SSW}. In the next to last section of the
paper, we briefly discuss the cases of the branched double covers of
$S^{3}$ along $10_{129}, 10_{153}$ and $10_{155}$ and in the last
section, we present how the relationship between Heegaard-Floer
homology and Khovanov homology can be used to draw some of the above
conclusions
over $\mathbb{Z}_{2}$ instead of $\mathbb{Q}$.\\

\noindent \textbf{Acknowledgements:} The author wishes to thank Z.
Szab\'{o} for his guidance during the course of this work as well as
J. Rasmussen and J. Greene for several helpful discussions.

\section{Preliminaries}

\subsection{Weighted graphs}

Let us start by introducing some terminology. Consider a graph $G$.
The \textbf{\textit{degree}} of a vertex $v$ of $G$, denoted $d(v)$,
is the number of edges which contain $v$. If $G$ is equipped with an
integer valued function $m$ on its vertices, then it is called a
\textbf{\textit{weighted graph}} and if $v$ is a vertex of such a
graph, then $m(v)$ is called the \textbf{\textit{multiplicity}} of
$v$. A vertex $v$ of a weighted graph $G$ is called
\textbf{\textit{bad}} if
\[
m(v) > -d(v).
\]

A weighted graph $G$ gives rise to a 4-manifold $X(G)$ with boundary
$Y(G)$. $X(G)$ is obtained as follows: On each vertex $v$ of $G$
consider a $D^{2}$-bundle over $S^{2}$ with Euler class $m(v)$.
Whenever two vertices $v$ and $w$ are joined by an edge, "plumb" the
corresponding disk bundles, i.e. pick a small disk ($D_{v}$ and
$D_{w}$) on each sphere so that the disk bundle over it is a product
($D_{v} \times D^{2}$ and $D_{w} \times D^{2}$) and then identify
$D_{v} \times D^{2}$ with $D_{w} \times D^{2}$ using a map that
preserves the product structures but interchanges the factors.

For $X(G)$ as above, $H_{2}(X(G);\mathbb{Z})$ is the lattice spanned
by the vertices of G and if $[v]$ is the homology class
corresponding to the vertex $v$, then the intersection form
$Q_{X(G)}$ is given by: $Q_{X(G)}([v],[v])=m(v)$ and
$Q_{X(G)}([v],[w])=1$ $(0)$ if the vertices $v$ and $w$ are (are
not) connected by an edge.

A weighted graph $G$ is called \textbf{\textit{negative-definite}}
when it is a disjoint union of trees and $Q_{X(G)}$ is
negative-definite.

\subsection{The Ozsv\'{a}th-Szab\'{o} 4-manifold invariant}

We move on to briefly recalling the definition of the closed
4-manifold invariant $\Phi$ introduced in \cite{OS6}.

Consider $W$ a smooth, oriented, connected cobordism between two
connected 3-manifolds $Y_{1}$ and $Y_{2}$ and $\mathfrak{s}$ a
$spin^{c}$ structure on $W$. $W$ and $\mathfrak{s}$ induce maps
$F_{W,\mathfrak{s}}^{+}$, $F_{W,\mathfrak{s}}^{-}$ and
$F_{W,\mathfrak{s}}^{\infty}$ between
$HF^{+}(Y_{1},\mathfrak{t}_{1})$ and
$HF^{+}(Y_{2},\mathfrak{t}_{2})$ (respectively
$HF^{-}(Y_{1},\mathfrak{t}_{1})$ and
$HF^{-}(Y_{2},\mathfrak{t}_{2})$,
$HF^{\infty}(Y_{1},\mathfrak{t}_{1})$ and
$HF^{\infty}(Y_{2},\mathfrak{t}_{2})$), where
$\mathfrak{t}_{i}=\mathfrak{s}|_{Y_{i}}$, $i \in \{1,2\}$. These
maps are uniquely determined up to sign.

If $W$ is a closed 4-manifold, it can be punctured in two points and
the resulting object can be viewed as a cobordism from $S^{3}$ to
$S^{3}$. Under the additional condition that $b_{2}^{+}(W)>1$, this
object can be further cut along some 3-manifold $Y$ and thus divided
into two cobordisms $W_{1}$ and $W_{2}$ with $b_{2}^{+}(W_{i})>0$,
$i \in \{1,2\}$, so that the map
\[
Spin^{c}(W) \to Spin^{c}(W_{1}) \times Spin^{c}(W_{2})
\]
induced by restriction is injective. Then a "mixed invariant"
\[
F_{W,\mathfrak{s}}^{mix}:HF^{-}(Y_{1}, \mathfrak{t_{1}}) \to
HF^{+}(Y_{2}, \mathfrak{t_{2}})
\]
can be defined by combining $F_{W_{1},\mathfrak{s}|_{W_{1}}}^{-}$
and $F_{W_{2},\mathfrak{s}|_{W_{2}}}^{+}$ in an appropriate way
(using the identification $HF^{+}_{red}(Y,\mathfrak{s}|_{Y})\cong
HF^{-}_{red}(Y,\mathfrak{s}|_{Y})$). This "mixed invariant" gives
rise to the \textbf{\textit{invariant $\Phi_{W,\mathfrak{s}}$}},
which is a map
\[
\Phi_{W,\mathfrak{s}}:\mathbb{Z}[U]\otimes \Lambda
^{*}(H_{1}(W)/Tors) \to \mathbb{Z} / \pm 1
\]
and is a smooth, oriented 4-manifold invariant.

\subsection{The correction term $d(Y,\mathfrak{t})$}

The 4-dimensional theory reviewed in the previous section has as a
by-product an absolute rational lift to the relative $\mathbb{Z}$
grading on the Floer-homology groups of a 3-manifold $Y$ endowed
with a torsion $spin^{c}$ structure. The correction term
$d(Y,\mathfrak{t})$ that we discuss in the present section is an
application of these absolute gradings.

In \cite{OS5}, the authors define a $\mathbb{Q}$-valued invariant
$d(Y,\mathfrak{t})$ (also called the \textbf{\textit{correction term
$d(Y,\mathfrak{t})$}}) associated to an oriented rational homology
3-sphere Y equipped with a $spin^{c}$ structure $\mathfrak{t}$ as
follows:

\begin{definition}
$d(Y,\mathfrak{t})$ is the minimal grading ($\widetilde{gr}$) of any
non-torsion element in the image of $HF^{\infty}(Y,\mathfrak{t})$ in
$HF^{+}(Y,\mathfrak{t})$.
\end{definition}

\noindent This is the Heegaard Floer homology analogue of the
Fr{\o}yshov invariant in Seiberg-Witten theory.

In the same paper it is proven that if $Y$ and $\mathfrak{t}$ are as
above and $X$ is a smooth, negative-definite 4-manifold with
$\partial X=Y$, then $\forall$ $\mathfrak{s} \in Spin^{c}(X)$ with
$\mathfrak{s}|_{Y}=\mathfrak{t}$
\[
c_{1}(\mathfrak{s})^{2}+rk(H^{2}(X;\mathbb{Z})) \leq
4d(Y,\mathfrak{t})
\]
In addition, in \cite{OS1}, Corollary 1.5, it is proven that for
negative-definite graphs G with at most two bad vertices
\begin{equation}
\label{d}
d(Y(G)),\mathfrak{t})=max_{\{K \in
Char_{\mathfrak{t}}((G))\}} \frac{K^{2}+|G|}{4}
\end{equation}
\noindent where $Char_{\mathfrak{t}}(G)$ denotes the set of
characteristic vectors for $X(G)$ which are first Chern classes of
$spin^{c}$ structures whose restriction to the boundary is
$\mathfrak{t}$ and $K^{2}$ is computed using $Q_{X(G)}^{-1}$.

\section{Rational blow-down along $Y_{1}=\Sigma(\overline{8_{20}})$}

Denote the branched double cover of $S^{3}$ along
$\overline{8_{20}}$ by $Y_{1}=\Sigma(\overline{8_{20}})$.

\subsection{A negative-definite 4-manifold $W_{1}$ with $\partial(W_{1})=Y_{1}$.}
\label{ss:W}

\indent Consider the knot $\overline{8_{20}}$ and following
\cite{OS2} construct a checkerboard coloring of the plane (see
Figure \ref{1}).

\begin{figure}[h]
\centering
\includegraphics{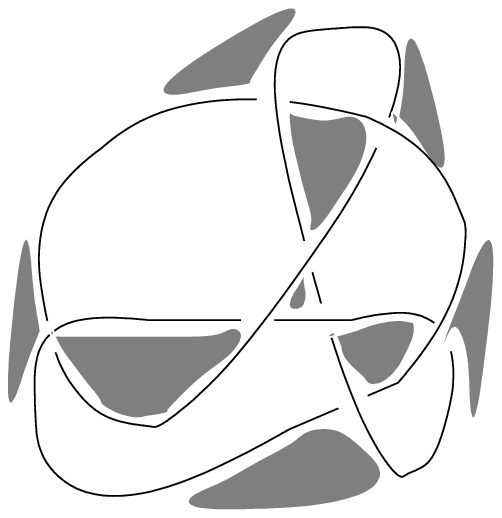}
\caption{A checkerboard coloring for $\overline{8_{20}}$} \label{1}
\end{figure}

Associate 1-handles to the black regions but one (the outer region
in Figure \ref{1}, without loss of generality) and at each crossing
add a $\pm 1$ framed 2-handle to an unknot looping through the two
1-handles using the sign convention of Figure \ref{2}.

\begin{figure}[h]
\centering
\includegraphics{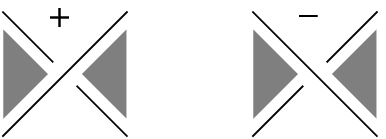}
\caption{Sign convention} \label{2}
\end{figure}

This process gives a 4-manifold $W_{1}$ with boundary
$\Sigma(\overline{8_{20}})$. After appropriate cancelations of
1-handles by 2-handles (see section 5.4 of \cite{GS} for more
details), the 4-manifold
described above can be represented by the plumbing tree below\\

\begin{center}
\setlength{\unitlength}{3cm}
          \begin{picture}(2.1,.6)
          \put(.6,.6){\circle*{.05}}
          \put(.4,.45){$-2$}
          \put(1.1,.6){\circle*{.05}}
          \put(.9,.45){$-2$}
          \put(1.6,.6){\circle*{.05}}
          \put(1.5,.45){$-2$}
          \put(2.1,.6){\circle*{.05}}
          \put(2,.45){$-2$}
          \put(1.1,.1){\circle*{.05}}
          \put(.9,0){$-3$}
          \put(.6,.6){\line(1,0){1.5}}
          \put(1.1,.1){\line(0,1){.5}}
          \put(0,.5){$W_{1}=$}
          \end{picture}
\end{center}

\subsection{$Y_{1}$ is an L-space}

In this subsection, we will exhibit that $Y_{1}$ is an L-space. The
notion of an L-space is a generalization of that of a lens space.
The precise definition is as follows (\cite{OS3}):

\begin{definition}
A closed 3-manifold Y is called an \textbf{\textit{L-space}} if
$H_{1}(Y;\mathbb{Q})=0$ and $\widehat{HF}(Y)$ is a free abelian
group with rank equal to $|H_{1}(Y;\mathbb{Z})|$, the number of
elements in $H_{1}(Y;\mathbb{Z})$.
\end{definition}

\noindent $\widehat{HF}(Y)$ is the 3-manifold invariant defined in
\cite{OS4}.\\

$Y_{1}$ is a rational homology sphere ($\mathbb{Q}HS^{3}$), as is
the branched double cover of $S^{3}$ along any knot K, denoted from
now on as $\Sigma(K)$. This is true because
$|H_{1}(\Sigma(K);\mathbb{Z})|=|\Delta_{K}(-1)|=det(K)$ finite.\\

To prove that $Y_{1}$ is an L-space over $\mathbb{Q}$, we first need
to introduce the notion of a quasi-alternating link, as defined in
\cite{OS2}.

\begin{definition}
The set $Q$ of \textbf{\textit{quasi alternating links}} is the
smallest set of links which satisfies the following properties:
\begin{enumerate}
\item the unknot is in $Q$
\item the set $Q$ is closed under the following operation. Suppose
$L$ is a link which admits a projection with a crossing with the
following properties:
\begin{itemize}
\item both resolutions $L_{0}, L_{1} \in Q$ (see Figure \ref{3})
\item $det(L_{0}),det(L_{1})\neq 0$
\item $det(L)=det(L_{0})+det(L_{1})$
\end{itemize}
then $L \in Q$.
\end{enumerate}
\end{definition}


\begin{figure}[h]
\labellist \small\hair 2pt \pinlabel $L$ at 22 0 \pinlabel $L_{0}$
at 96 0 \pinlabel $L_{1}$ at 168 0
\endlabellist
\centering
\includegraphics{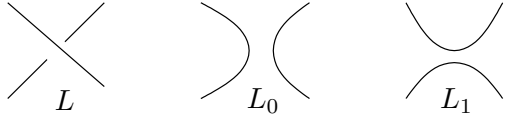}
\caption{The 0-and 1-resolutions of a link L at one of its crossings
as above are obtained by replacing this crossing with the simplified
pictures shown in the figure.} \label{3}
\end{figure}

\begin{proposition}
\label{quasi820}
$\overline{8_{20}}$ is a quasi-alternating knot.
\end{proposition}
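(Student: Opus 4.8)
The plan is to show that $\overline{8_{20}}$ is quasi-alternating by exhibiting a crossing in a diagram whose two resolutions are themselves quasi-alternating links with determinants that add up to $\det(\overline{8_{20}})$. The first step is to recall that $\det(8_{20}) = 9$ (the determinant is unchanged by mirroring, and it equals $|\Delta_{8_{20}}(-1)|$), so I need to produce a crossing at which the two resolutions $L_0, L_1$ satisfy $\det(L_0) + \det(L_1) = 9$ with both $\det(L_i) \neq 0$. I would take a convenient diagram of $\overline{8_{20}}$ (for instance, one that makes the near-alternating structure visible) and pick a crossing where one resolution produces an alternating link — alternating links are all quasi-alternating with nonzero determinant by \cite{OS2}, so that resolution needs no further argument — and the other resolution produces a simpler link (fewer crossings) whose quasi-alternating status can be checked directly, ideally another alternating link or a connected sum/simple link whose determinant is easy to compute via the Goeritz or Kauffman-state formula.

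The key steps, in order, are: (1) fix an explicit planar diagram $D$ of $\overline{8_{20}}$ and label its crossings; (2) select the crossing $c$ and compute $D_0 = D_{c=0}$ and $D_1 = D_{c=1}$, identifying each as a known link; (3) verify $\det(D_0), \det(D_1) \neq 0$ and $\det(D_0) + \det(D_1) = \det(\overline{8_{20}}) = 9$, using the skein relation for determinants $\det(L) = \det(L_0) + \det(L_1)$ as a consistency check (noting that this additivity always holds up to sign, so the real content is that the signs line up, i.e. the diagram is "adequate" at $c$ in the relevant sense); (4) conclude $D_0, D_1 \in Q$ — the alternating one by the theorem of Ozsv\'ath–Szab\'o in \cite{OS2} that alternating links are quasi-alternating, and the other either because it too is alternating or by one further application of the recursive definition down to unknots; (5) invoke the closure property in the definition of $Q$ to deduce $\overline{8_{20}} \in Q$.

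The main obstacle I anticipate is step (3): it is not automatic that the determinant of $\overline{8_{20}}$ splits as a \emph{sum} (rather than a difference) of the determinants of the two resolutions at a chosen crossing, because the unsigned determinant only satisfies $\det(L) = |\det(L_0) \pm \det(L_1)|$ in general; one must choose the crossing carefully so that the diagram behaves like an alternating diagram locally and the signs are all positive. Concretely, with $\det(\overline{8_{20}}) = 9$, a natural hope is a splitting such as $9 = 4 + 5$ or $9 = 2 + 7$ or $9 = 1 + 8$ coming from resolutions that are torus links or twist knots (e.g. $T(2,n)$-type links, which have determinant $n$ and are alternating hence quasi-alternating); pinning down a diagram realizing such a split, and checking the sign condition, is the crux. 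A secondary, purely expository obstacle is drawing the diagram cleanly enough that the two resolutions are manifestly recognizable; if no single crossing works directly, I would iterate, applying the quasi-alternating reduction to one of the resolutions again, building a finite "resolution tree" all of whose leaves are unknots and all of whose internal nodes satisfy the determinant-additivity condition.
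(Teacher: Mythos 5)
Your strategy is exactly the one the paper uses -- resolve a well-chosen crossing, check $\det(L_0)+\det(L_1)=\det(\overline{8_{20}})=9$ with both determinants nonzero, handle one resolution by the alternating-implies-quasi-alternating theorem of \cite{OS2}, and recurse on the other if necessary. You even correctly anticipate the splitting $9=1+8$ and the need for a second level of recursion. But the proposal stops at the plan: the entire mathematical content of this proposition is the explicit resolution tree, and you have deferred precisely that. You do not name the crossing, do not identify the two resolutions, and do not compute their determinants; you yourself flag this as ``the crux.'' As written, the argument could not be checked or even falsified, since no diagram, crossing, or link is specified.

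For comparison, the paper's execution is: resolve the left topmost crossing of the checkerboard diagram of $\overline{8_{20}}$; the $1$-resolution is the unknot ($\det=1$) and the $0$-resolution is a two-component link $L$ whose determinant is computed to be $8$ via the Conway skein relation for the Alexander polynomial, so $9=1+8$ and both summands are nonzero. Then $L$ is shown to be quasi-alternating by resolving one of its crossings: the $1$-resolution is again the unknot and the $0$-resolution is $\overline{5_2}$, which is alternating (hence quasi-alternating) with $\det(\overline{5_2})=7$, giving $8=1+7$. Your worry about the sign ambiguity in $\det(L)=|\det(L_0)\pm\det(L_1)|$ is legitimate and is exactly why the paper computes $\det(L)=8$ directly from the skein relation rather than assuming additivity; to complete your proof you would need to supply these concrete identifications and computations (or equivalent ones for a different crossing).
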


\begin{proof}

Consider a projection of $\overline{8_{20}}$ like the one in Figure
\ref{1} and resolve the left topmost crossing. It is easy to see
that the 1-resolution yields the unknot and the 0-resolution yields
the link shown in Figure \ref{4}, call it L.

\begin{figure}[h]
\centering
\includegraphics{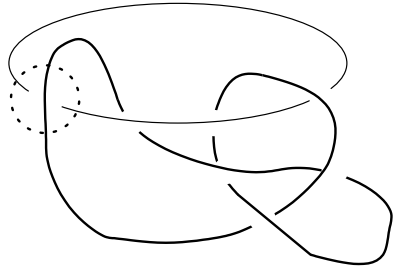}
\caption{The 0-resolution of $8_{20}$ at it's left topmost crossing.}
\label{4}
\end{figure}

Using the skein relationship
$\Delta_{L_{+}}(x)-\Delta_{L_{-}}(x)+(x^{-\frac{1}{2}}-x^{\frac{1}{2}})\Delta_{L_{0}}(x)=0$
(Figure \ref{5} conveys the meaning of $L_{+},L_{-}$ and $L_{0}$)
satisfied by the Conway normalized Alexander polynomial (see
\cite{Li}), one can compute that
$\Delta_{L}(x)=x^{-\frac{3}{2}}-3x^{-\frac{1}{2}}+3x^{\frac{1}{2}}-x^{\frac{3}{2}}$
and $det(L)=|\Delta_{L}(-1)|=8$.

\begin{figure}[h]
\labellist \small\hair 2pt \pinlabel $L_{+}$ at 22 -4 \pinlabel
$L_{-}$ at 102 -4 \pinlabel $L_{0}$ at 185 -4
\endlabellist
\centering
\includegraphics{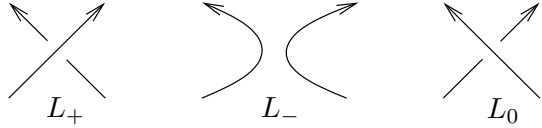}
\caption{$L_{+},L_{-}$ and $L_{0}$ in the skein relationship for the
Alexander polynomial}
\label{5}
\end{figure}


Thus, it remains to prove that L is in it's turn a quasi-alternating
link. To this end, we resolve the marked crossing in Figure \ref{4}
and we get the unknot as the 1-resolution and the knot
$\overline{5_{2}}$ as the 0-resolution. The result folows, since
$\overline{5_{2}}$ is alternating, thus quasi-alternating (See Lemma
3.2 of \cite{OS2}), and
$det(\overline{5_{2}})=|\Delta_{\overline{5_{2}}}(-1)|=|\Delta_{5_{2}}(-1)|=|2(-1)^{-1}-3+2(-1)|=7$.

\end{proof}

\begin{corollary}
$ Y_{1}=\Sigma(\overline{8_{20}}) $ is an L-space
\end{corollary}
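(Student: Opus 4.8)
The plan is to combine the result just proved in Proposition \ref{quasi820} with a theorem from \cite{OS2} relating quasi-alternating links to L-spaces via their branched double covers. Specifically, Ozsv\'{a}th and Szab\'{o} show in \cite{OS2} that if $L$ is a quasi-alternating link, then its branched double cover $\Sigma(L)$ is an L-space. Since $\overline{8_{20}}$ has been shown to be quasi-alternating, the corollary follows immediately upon applying this theorem to $L = \overline{8_{20}}$.

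First I would recall that $\Sigma(\overline{8_{20}})$ is a rational homology 3-sphere, which was already noted in the text (its first homology has order $\det(\overline{8_{20}})$, which is finite and nonzero). This is needed so that the L-space condition $H_{1}(Y;\mathbb{Q})=0$ is satisfied, and it is the first half of the definition of an L-space. Then I would invoke the theorem of \cite{OS2} that quasi-alternating links have L-space branched double covers; the proof there proceeds by an induction on the determinant using the unoriented skein exact triangle in $\widehat{HF}$, showing that at each stage the rank of $\widehat{HF}(\Sigma(L))$ is exactly $\det(L) = \det(L_0) + \det(L_1)$, matching $|H_{1}(\Sigma(L);\mathbb{Z})|$. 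Since we are quoting this as an established result, no work is required on our part beyond verifying the hypothesis, which is exactly the content of Proposition \ref{quasi820}.

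There is essentially no obstacle here: the corollary is a one-line consequence of Proposition \ref{quasi820} and the cited theorem. The only thing to be careful about is that the L-space property of $\Sigma(L)$ in \cite{OS2} is stated (and proved) with $\mathbb{Z}_{2}$ coefficients, since the unoriented skein exact sequence for $\widehat{HF}$ is established over $\mathbb{Z}_{2}$; over $\mathbb{Q}$ one needs the statement to still hold, which it does because for these particular 3-manifolds the relevant $\widehat{HF}$ groups have no torsion and the ranks agree. I would phrase the corollary's proof to note this, and indeed the last section of the paper is devoted precisely to the interplay between $\mathbb{Z}_{2}$ and $\mathbb{Q}$ coefficients, so I would cross-reference that discussion rather than belabor it here.

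\begin{proof}
As observed above, $Y_{1} = \Sigma(\overline{8_{20}})$ is a rational homology 3-sphere, so $H_{1}(Y_{1};\mathbb{Q}) = 0$. By Proposition \ref{quasi820}, $\overline{8_{20}}$ is quasi-alternating, so by the results of \cite{OS2} its branched double cover $\Sigma(\overline{8_{20}})$ is an L-space, i.e. $\widehat{HF}(Y_{1})$ is free abelian of rank $|H_{1}(Y_{1};\mathbb{Z})|$.
\end{proof}
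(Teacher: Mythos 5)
Your proof is correct and takes exactly the same route as the paper: it applies the theorem of \cite{OS2} that branched double covers of quasi-alternating links are L-spaces, with Proposition \ref{quasi820} supplying the hypothesis. The extra remarks about coefficients are reasonable context but not part of the paper's (one-line) argument.
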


\begin{proof}
Making use of a result in \cite{OS2} that states that if $L$ is a
quasi-alternating link, then $\Sigma(L)$ is an L-space, Proposition
\ref{quasi820} implies that $ Y_{1}=\Sigma(\overline{8_{20}}) $ is
an L-space.
\end{proof}

\subsection{$Y_{1}$ bounds a rational homology ball $B_{1}$}
\label{ss:B}

To see this, it suffices to notice that $\overline{8_{20}}$ is slice
and the branched cover of $B^{4}$ along the slice disk is a rational
homology ball. We mention here that whether a knot is slice or not
can be read off from its smooth four genus, which has been computed
for all knots up to ten crossings and is listed on the corresponding
knot tables. For the specific case of the knot $8_{20}$ that we are
studying here, the interested reader is refered to page 86 of
\cite{Li} for a concrete description of the slice disc.

In fact, $Y_{1}$ is one of the manifolds listed in \cite{CH}, it is
the manifold $(2,3,3;9)$ in category (5) with p=3 and s=-1. Thus, we
can explicitly describe a 2-handle addition to $Y_{1} \times I$
along a circle in $Y_{1} \times 1$ that leads to a manifold with
boundary $Y_{1} \bigcup S^{1} \times S^{2}$ and eventually, after
attaching a 3- and a 4-handle, to a rational homology ball $B_{1}$
with boundary $Y_{1}$. We proceed to do so.

First note that $Y_{1}$ can be alternatively represented as in
Figure \ref{6}.

\begin{figure}[h]
\centering
\includegraphics{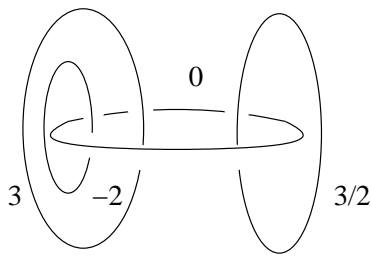}
\caption{An alternative description of $Y_{1}$}
\label{6}
\end{figure}

Then, there is a 2-handle addition depicted in the first part of
Figure \ref{14} at the end of the paper that has as end product the
last manifold of this figure, which, according to the Lemma in page
26 of \cite{CH}, is homeomorphic to $S^{1} \times S^{2}$.

\subsection{Blow-down formula}
\label{ss:bd}

We will call our graph $G_{1}$ and label its vertices as shown below

\begin{center}
 \setlength{\unitlength}{3cm}
          \begin{picture}(2.1,.6)
          \put(.6,.6){\circle*{.05}}
          \put(.5,.45){$v_{1}$}
          \put(1.1,.6){\circle*{.05}}
          \put(.9,.45){$v_{2}$}
          \put(1.6,.6){\circle*{.05}}
          \put(1.5,.45){$v_{3}$}
          \put(2.1,.6){\circle*{.05}}
          \put(2,.45){$v_{4}$}
          \put(1.1,.1){\circle*{.05}}
          \put(.9,0){$v_{5}$}
          \put(.6,.6){\line(1,0){1.5}}
          \put(1.1,.1){\line(0,1){.5}}
          \put(0,.5){$G_{1}=$}
          \end{picture}
\end{center}

\noindent Note that $G_{1}$ has only one bad vertex and that is
$v_{2}$ with $-2=m(v_{2})>-d(v_{2})=-3$.

First, we will make use of the calculations of the Heegaard Floer
homology groups for 3-manifolds obtained by plumbings of spheres
specified by certain graphs carried out in section 3 of \cite{OS1}.
It is easy to check that of the 48 characteristic vectors $K_{0} \in
\{0,2\} \times \{0,2\} \times \{0,2\} \times \{0,2\} \times
\{-1,1,3\}$, after applying the algorithm described in section 3 of
\cite{OS1}, only 9 initiate a path ending at a vector $L$ satisfying
\begin{equation}\label{16.1}
-2 \leq \langle L,v_{i} \rangle \leq 0 \mbox{ } \forall \mbox{ } i
\in \{1,2,3,4\}
\end{equation}
\begin{equation}\label{16.2}
-3 \leq  \langle L,v_{5} \rangle \leq 1
\end{equation}
These are the following: $(0,0,0,0,\pm 1), (2,0,0,0,\pm 1),
(0,0,0,2,\pm 1),$ \\
$(0,0,0,0,3), (0,0,2,0,-1)$ and $(0,2,0,0,-1)$.

\begin{remark}
This, together with the fact that $|H_{1}(Y_{1})| = 9$,
provides an \textbf{\textit{alternative proof of the fact that
$Y_{1}$ is an L-space}}.
\end{remark}

Next, we need to check which of the above 9 vectors representing
$spin^{c}$ structures on $Y_{1}$ extend to the rational homology
ball. Figure \ref{7} illustrates the first few steps of computing
the enhanced intersection form after the 2-handle addition that we
presented at the end of section 3.3. We leave it as an exercise to
the reader to carry out the next few steps and we only record here
the outcome of this process:

\[
A_{1}=
\begin{pmatrix}
-2 & 1 & 0 & 0 & 0 & 0 \\
1 & -2 & 1 & 0 & 1 & 0 \\
0 & 1 & -2 & 1 & 0 & 0 \\
0 & 0 & 1 & -2 & 0 & -1 \\
0 & 1 & 0 & 0 & -3 & -2 \\
0 & 0 & 0 & -1 & -2 & -4
\end{pmatrix}
\]

\noindent with $Ker(A_{1})=<(-1,-2,-\frac{5}{3}, -\frac{4}{3},
-\frac{4}{3},1)>.$ The $spin^{c}$ structures that extend are
represented by vectors that are orthogonal to the kernel of the
enhanced intesection form, that is satisfy
\begin{equation}
(a_{1},a_{2},a_{3},a_{4},a_{5},a_{6})(-1,-2,-\frac{5}{3},
-\frac{4}{3}, -\frac{4}{3},1)=0
\end{equation}
It is easy to see that only 3 of these vectors, specifically
(0,0,0,0,3),(0,0,0,2,1) and (0,0,2,0,-1), satisfy the equation
\begin{equation}
2a_{3}+a_{4}+a_{5} \equiv 0(mod3)
\end{equation}
and thus can be extended to the rational homology ball.

\begin{figure}[h]
\centering
\includegraphics{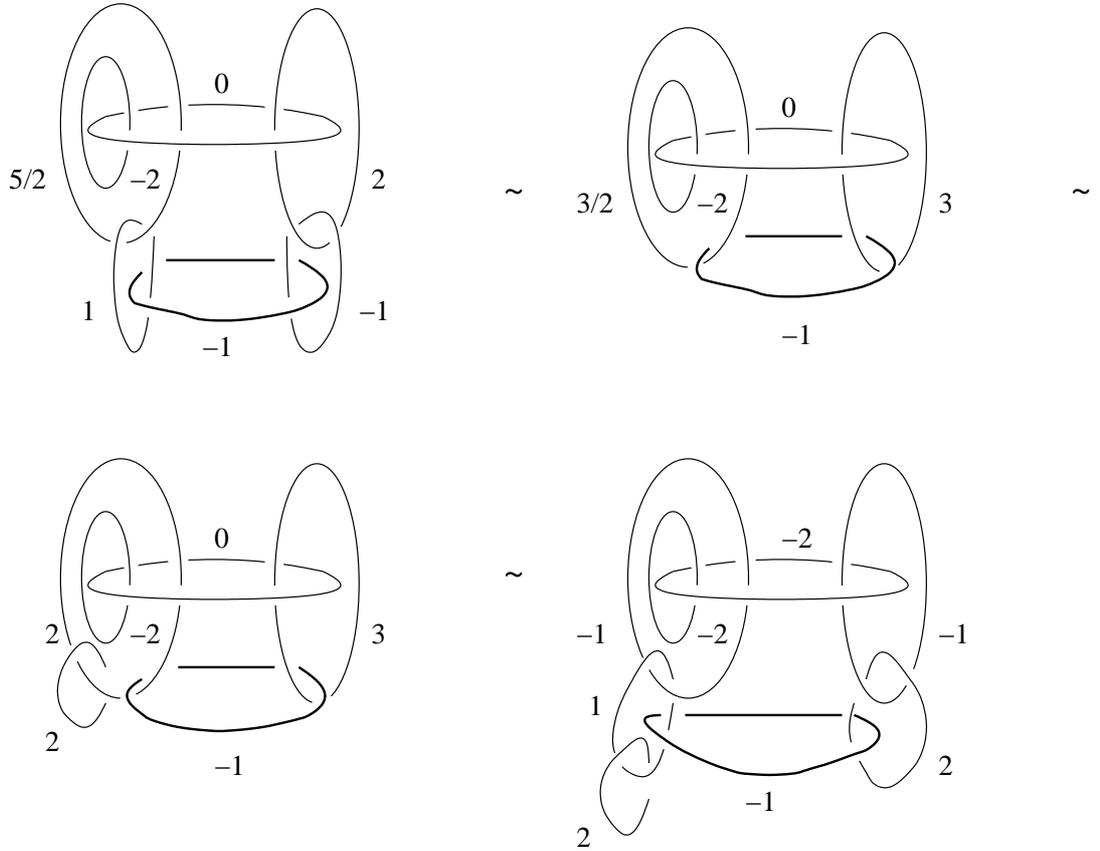}
\caption{Computing the enhanced intersection form} \label{7}
\end{figure}

\begin{proposition}
\label{XB1}
Let $X_{W_{1}}$ be a closed, oriented, smooth 4-manifold
with $b_{2}^{+}(X_{W_{1}}) > 1$ containing $W_{1}$ and let
$\mathfrak{s}_{i},$ $i \in \{1,2,3\}$, $spin^{c}$ structures on
$X_{W_{1}}$ that restrict to $W_{1}$ to give the three $spin^{c}$
structures listed above. Then, for any $\phi \in Diff^{+}(Y_{1})$
the 4-manifold $X_{B_{1}}=(X_{W_{1}}-W_{1})\bigcup _{\phi} B_{1}$
has $spin^{c}$ structures $\mathfrak{s'}_{i}$, $i \in \{1,2,3\}$,
for
which\\

\begin{center}
$\Phi _{(X_{B_{1}}, \mathfrak{s'}_{i})}= \pm
\Phi_{(X_{W_{1}},\mathfrak{s}_{i})}$
\end{center}

\end{proposition}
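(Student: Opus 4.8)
The plan is to follow the rational blow-down strategy of Roberts (\cite{Ro}), which is itself modeled on the gluing behaviour of the mixed invariant $\Phi$ under cutting and pasting along an L-space. The key structural facts we have assembled are: $Y_{1}$ is an L-space; $Y_{1}=\partial W_{1}$ with $W_{1}$ negative-definite; and $Y_{1}=\partial B_{1}$ with $B_{1}$ a rational homology ball, together with the explicit handle picture for $B_{1}$ coming from \cite{CH}. The proof should proceed by comparing the two decompositions $X_{W_{1}}=(X_{W_{1}}-W_{1})\cup_{\phi} W_{1}$ and $X_{B_{1}}=(X_{W_{1}}-W_{1})\cup_{\phi} B_{1}$ across the common L-space boundary $Y_{1}$, and showing that for the three $spin^{c}$ structures in question the contribution of the cap ($W_{1}$ or $B_{1}$) to the mixed invariant is the same up to sign.

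First I would recall the behaviour of the relative invariants on an L-space. Since $Y_{1}$ is an L-space, $HF^{+}(Y_{1},\mathfrak{t})$ and $HF^{-}(Y_{1},\mathfrak{t})$ are standard ($\mathcal{T}^{+}_{(d)}$ and its dual) for every $spin^{c}$ structure $\mathfrak{t}$, with the reduced parts vanishing; this is precisely the situation in which the mixed invariant factors through a single copy of $HF^{-}(Y_{1},\mathfrak{t})\to HF^{+}(Y_{1},\mathfrak{t})$ and the cobordism maps are controlled by degree shifts, i.e.\ by the quantities $\tfrac14(c_{1}(\mathfrak{s})^{2}+\mathrm{rk})$ and the correction terms $d(Y_{1},\mathfrak{t})$. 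I would then observe that for a $spin^{c}$ structure $\mathfrak{s}$ on a negative-definite cap $Z$ with $\partial Z=Y_{1}$, the induced map $F^{-}_{Z,\mathfrak{s}}$ (respectively $F^{+}$) hits the bottom of the tower exactly when $\tfrac14(c_{1}(\mathfrak{s})^{2}+\mathrm{rk}(H^{2}(Z)))=d(Y_{1},\mathfrak{s}|_{Y_{1}})$, i.e.\ when $\mathfrak{s}$ is a maximizer in the formula (\ref{d}). The characteristic vectors $(0,0,0,0,3),(0,0,0,2,1),(0,0,2,0,-1)$ are chosen precisely as those that both maximize over $W_{1}$ (they are among the nine sharp vectors for the plumbing found via \cite{OS1}) and extend over $B_{1}$ (the kernel/mod-$3$ computation with $A_{1}$); the $spin^{c}$ structure $\mathfrak{s}'_{i}$ on $X_{B_{1}}$ is then defined by gluing $\mathfrak{s}_{i}|_{X_{W_{1}}-W_{1}}$ to the extension of $\mathfrak{s}_{i}|_{Y_{1}}$ over $B_{1}$, which is well-defined because restriction to $Y_{1}$ matches and, on an L-space, $spin^{c}$ structures are detected by their boundary restriction together with the square.

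Next I would run the actual gluing. Decompose $X_{W_{1}}$ (punctured) as $W_{2}\cup_{Y}W_{1}'$ with $b_{2}^{+}$ positive on each side, arranging the cut $Y$ so that $Y_{1}$ lies in, say, $W_{1}'$, and write $W_{1}'=(W_{1}'-W_{1})\cup_{Y_{1}}W_{1}$. The mixed invariant $\Phi_{(X_{W_{1}},\mathfrak{s}_{i})}$ is the composite of $F^{-}_{W_{2}}$, the identification $HF^{-}_{red}\cong HF^{+}_{red}$, and $F^{+}_{W_{1}'}$; factoring the latter through $Y_{1}$ gives $F^{+}_{W_{1}'-W_{1}}\circ F^{+}_{W_{1}}$ (up to sign and $U$-equivariance). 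Replacing $W_{1}$ by $B_{1}$ replaces this by $F^{+}_{W_{1}'-W_{1}}\circ F^{+}_{B_{1}}$. Since $B_{1}$ is a rational homology ball, the relevant cobordism map $F^{+}_{B_{1}}$ (after the obligatory puncture and handle bookkeeping from the explicit $B_{1}$ picture) is, for the extended $spin^{c}$ structure, again an isomorphism onto the bottom of the tower $HF^{+}(Y_{1},\mathfrak{s}_{i}|_{Y_{1}})$ — this is where the sharpness of the maximizer and the L-space hypothesis do the work — and the identical statement holds for $F^{+}_{W_{1}}$. Hence the two composites agree up to the sign ambiguity inherent in $\Phi$, giving $\Phi_{(X_{B_{1}},\mathfrak{s}'_{i})}=\pm\Phi_{(X_{W_{1}},\mathfrak{s}_{i})}$. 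Allowing the regluing diffeomorphism $\phi\in\mathrm{Diff}^{+}(Y_{1})$ costs nothing, since $\phi$ acts on $HF^{+}(Y_{1},\mathfrak{s}_{i}|_{Y_{1}})\cong\mathcal{T}^{+}$ by a graded automorphism, hence by $\pm1$ on the bottom element, which is again absorbed into the sign.

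The main obstacle I anticipate is the bookkeeping around $B_{1}$: one must pass from the static picture ``$B_{1}$ is built from $Y_{1}\times I$ by a $2$-handle, a $3$-handle and a $4$-handle'' (as extracted from \cite{CH} and Figure \ref{14}) to an honest chain of cobordism maps and verify that the composite sends the generator of $HF^{-}$ to the bottom of the $HF^{+}$ tower for exactly the extended $spin^{c}$ structure. The $2$-handle step is a genuine cobordism map whose degree shift must be computed and matched against $d(Y_{1},\mathfrak{s}_{i}|_{Y_{1}})$; the $3$-handle and $4$-handle steps contribute the standard maps between $\widehat{HF}/HF^{+}$ of $Y_{1}\cup(S^{1}\times S^{2})$, $Y_{1}$, and $S^{3}$, which are isomorphisms in the relevant grading. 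This is precisely the computation that Roberts carries out in the alternating case in \cite{Ro}, and I would structure this part of the argument to invoke his lemmas verbatim, checking only that the hypotheses (L-space boundary, negative-definite $W_{1}$, rational homology ball cap with the stated handle decomposition, agreement of boundary $spin^{c}$ structures) are met in our situation — all of which were established in the preceding subsections.
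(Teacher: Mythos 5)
Your proposal is correct and ultimately takes the same route as the paper: both reduce the statement to Theorem 2 of \cite{Ro}, verifying its hypotheses --- that $Y_{1}$ is a rational homology sphere with $HF^{+}_{red}(Y_{1},\mathfrak{t}_{i})=0$ (i.e.\ an L-space), and that $W_{1}$ is a sleek negative-definite $4$-manifold, which follows from \cite{OS1} since $G_{1}$ is negative-definite with one bad vertex. The paper simply cites Roberts' theorem rather than re-deriving the cut-and-paste argument for the mixed invariant that you sketch, but your final paragraph makes clear you would invoke his lemmas in the same way.
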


\begin{proof}
Call $\mathfrak{t}_{i}=\mathfrak{s}_{i}|_{Y_{1}}$. We will apply
Theorem 2 of \cite{Ro}, so we only need to check that $Y_{1}$
satisfies the three conditions listed there. We have already seen
that $Y_{1}$ is a rational homology sphere, so it remains to check
the last two conditions. For the second condition, recall that there
is a long exact sequence
\[
... \rightarrow HF^{-}(Y_{1},\mathfrak{t}_{i})
\xrightarrow{\text{i}} HF^{\infty}(Y_{1},\mathfrak{t}_{i})
\xrightarrow{\text{$\pi$}} HF^{+}(Y_{1},\mathfrak{t}_{i})
\xrightarrow{\text{$\delta$}} ...
\]
and that the 3-manifold invariant $HF^{+}_{red}(Y_{1},
\mathfrak{t}_{i})$ is defined as $HF^{+}_{red}(Y_{1},
\mathfrak{t}_{i})=Coker(\pi)=HF^{+}(Y_{1},\mathfrak{t}_{i})/Im\pi$.
But $Y_{1}$ is an L-space, so the map $\delta$ is trivial,
$Im\pi=Ker\delta=HF^{+}(Y_{1}, \mathfrak{t}_{i})$ and
$HF^{+}_{red}(Y_{1}, \mathfrak{t}_{i})=0$. Hence the second
condition of the theorem holds as well. Finally, the third condition
requires $W_{i}$ to be a sleek negative-definite 4-manifold. This is
true according to the results in \cite{OS1}, given that $G_{1}$ is a
negative-definite graph with only one bad vertex.
\end{proof}

\subsection{Using $d(Y,\mathfrak{t})$ to study the rational blow-down operation}

At this point, we present a different approach to studying which of
the $spin^{c}$ structures extend to the rational homology ball. The
advantage of this approach is that it does not require a concrete
description of the rational homology ball.

Fix a $spin^{c}$ structure $\mathfrak{t}$ over $Y_{1}$. Then
\begin{equation}
d(Y_{1},\mathfrak{t})=max_{\{K \in Char_{\mathfrak{t}}(G_{1})\}}
\frac{K^{2}+5}{4}
\end{equation}
since the graph $G_{1}$ that we are presently studying has only one
bad vertex. By Proposition 3.2 of \cite{OS1} the maximum is always
achieved among the characteristic vectors in
$Char_{\mathfrak{t}}(G_{1})$ which have coordinates in $\{0,2\}
\times \{0,2\} \times \{0,2\} \times \{0,2\} \times \{-1,1,3\}$ and
initiate paths with final vectors satisfying the equations
(\ref{16.1}) and (\ref{16.2}).

Furthermore, if a $spin^{c}$ structure $\mathfrak{t}$ extends across
a rational homology ball, then $d(Y_{1},\mathfrak{t})=0$. This
follows from the more general statement proven in Proposition 9.9 of
\cite{OS5} that if $(Y_{1},\mathfrak{t}_{1})$ and
$(Y_{2},\mathfrak{t}_{2})$ are rational homology cobordant rational
homology 3-spheres equipped with $spin^{c}$ structures, then
$d(Y_{1},\mathfrak{t}_{1})=d(Y_{2},\mathfrak{t}_{2})$.

We compute the square of the nine vectors above and only
$(0,0,0,0,3)$, $(0,0,0,2,1)$ and $(0,0,2,0,-1)$ have square equal to
-5. Therefore these give the only candidates for $spin^{c}$
structures that extend.

Moreover, we were already expecting precisely
$3=\sqrt{9}=\sqrt{|H_{1}(Y_{1};\mathbb{Z})|}$ $spin^{c}$ structures
to extend, according to the arguments presented in Lemma 2 of
\cite{Ro}. We restate this here and then use it to justify our
claim.

\begin{lemma}
$Y$ is a rational homology 3-sphere and $h=|H_{1}(Y;\mathbb{Z})|$.
$Y$ bounds $X$ and $s=|det(Q_{X})|$, $Q_{X}$ denoting the
intersection form of $X$. Then $h=st^{2}$ where $t$ is the order of
the image of the torsion of $H^{2}(X;\mathbb{Z})$ in
$H^{2}(Y;\mathbb{Z})$ (and $st$ is the order of the image of
$H^{2}(X;\mathbb{Z})$ in $H^{2}(Y;\mathbb{Z})$).
\end{lemma}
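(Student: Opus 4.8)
The plan is to analyze the long exact sequence in cohomology associated to the pair $(X, Y)$, keeping careful track of torsion. First I would write down the portion
\[
H^{1}(Y;\mathbb{Z}) \xrightarrow{\delta} H^{2}(X,Y;\mathbb{Z}) \xrightarrow{j^{*}} H^{2}(X;\mathbb{Z}) \xrightarrow{i^{*}} H^{2}(Y;\mathbb{Z}) \xrightarrow{\delta'} H^{3}(X,Y;\mathbb{Z}),
\]
and invoke Poincar\'e--Lefschetz duality $H^{2}(X,Y;\mathbb{Z}) \cong H_{2}(X;\mathbb{Z})$ and $H^{3}(X,Y;\mathbb{Z}) \cong H_{1}(X;\mathbb{Z})$. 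Since $Y$ is a rational homology $3$-sphere, $H^{1}(Y;\mathbb{Z}) = 0$, so $j^{*}$ is injective; the map $j^{*}\colon H_{2}(X;\mathbb{Z}) \to H^{2}(X;\mathbb{Z})$ is, up to the duality identification, exactly the adjoint of the intersection form $Q_{X}$, i.e.\ the map $\Lambda \to \Lambda^{*}$, $x \mapsto Q_{X}(x, -)$. Its cokernel is a finite group of order $|\det Q_{X}| = s$ (here I would note $Q_X$ is nondegenerate precisely because $H_1(Y;\mathbb{Q})=0$).

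Next I would set up the counting. Let $s t$ denote the order of the image of $i^{*}\colon H^{2}(X;\mathbb{Z}) \to H^{2}(Y;\mathbb{Z})$ — this is a priori just a name for that order; the content of the lemma is to identify it. From exactness, $\operatorname{image}(i^{*}) \cong H^{2}(X;\mathbb{Z}) / \operatorname{image}(j^{*}) = H^{2}(X;\mathbb{Z}) / Q_{X}(H_{2}(X;\mathbb{Z}))$. This quotient fits in a short exact sequence whose other two terms are $\operatorname{coker}(Q_X)$ (order $s$) coming from the free part $H^2(X)/\mathrm{Tors}$, and the torsion subgroup $\mathrm{Tors}\, H^{2}(X;\mathbb{Z})$ modulo whatever part of it dies — but since $\operatorname{image}(j^{*})$ lands in the image of the free lattice $H_2(X)$, no torsion of $H^2(X)$ is hit, so in fact $\mathrm{Tors}\,H^2(X;\mathbb{Z})$ injects into $\operatorname{image}(i^*)$. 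The order of the image of this torsion in $H^{2}(Y;\mathbb{Z})$ I would call $t$ (consistently with the lemma statement: $t$ is the order of the image of $\mathrm{Tors}\, H^2(X)$). Then the short exact sequence
\[
0 \to \bigl(\text{image of }\mathrm{Tors}\,H^2(X)\bigr) \to \operatorname{image}(i^*) \to \operatorname{coker}(Q_X) \to 0
\]
gives $|\operatorname{image}(i^*)| = t \cdot s$, which matches the parenthetical assertion. Finally, $h = |H^{2}(Y;\mathbb{Z})| = |H_{1}(Y;\mathbb{Z})|$, and I would compute $h = |\operatorname{image}(i^*)| \cdot |\operatorname{coker}(i^*)|$; by exactness $\operatorname{coker}(i^*) \hookrightarrow H^{3}(X,Y;\mathbb{Z}) \cong H_{1}(X;\mathbb{Z})$, and by the universal coefficient theorem together with $H^1(Y)=0 \Rightarrow H_1(Y)$ finite $\Rightarrow$ the dual statement, one shows $H_1(X;\mathbb{Z})$ has the same order-relevant torsion as $H^2(X)$ — more precisely I would argue $|\operatorname{coker}(i^*)| = t$ by a duality/linking-form symmetry between the image of $\mathrm{Tors}\,H^2(X) \to H^2(Y)$ and the relevant quotient of $H_1(X)$. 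Combining, $h = (st)\cdot t = s t^{2}$.

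The main obstacle I anticipate is the last step: pinning down $|\operatorname{coker}(i^{*})| = t$ and making the two occurrences of "$t$" literally the same integer. The clean way is to use the nondegenerate linking pairing on $H_1(Y;\mathbb{Z}) \cong H^2(Y;\mathbb{Z})$ together with the fact that, for a $4$-manifold $X$ with $\partial X = Y$, the subgroup $\operatorname{image}(i^{*}) \subseteq H^2(Y;\mathbb{Z})$ is a "half-dimensional" isotropic-type subgroup whose annihilator under the linking form is exactly itself when the form is nondegenerate on it — i.e.\ $|\operatorname{image}(i^*)|^2 = |H^2(Y;\mathbb{Z})| \cdot |\,\text{(defect)}\,|$. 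I would have to be careful here because in general $\operatorname{image}(i^*)$ need not be its own annihilator; the precise statement (this is essentially the content of the lemma and appears in \cite{Ro}) is that the annihilator of $\operatorname{image}(i^*)$ is $\operatorname{image}(\mathrm{Tors}\,H^2(X) \to H^2(Y))$, of order $t$, whence $h = |\operatorname{image}(i^*)| \cdot t = (st)\cdot t = st^2$. So concretely I would: (i) prove $\operatorname{image}(i^*)^{\perp} = \operatorname{coker}(i^*)$ under the linking form, identifying the latter with $\operatorname{image}(\mathrm{Tors}\,H^2(X))$ via the boundary map and duality; (ii) conclude $|\operatorname{image}(i^*)| \cdot |\operatorname{coker}(i^*)| = h$; (iii) combine with $|\operatorname{image}(i^*)| = st$ and $|\operatorname{coker}(i^*)| = t$ from the cohomology sequence above. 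Everything else — the identification of $j^*$ with $Q_X$, the UCT bookkeeping — is routine homological algebra that I would not spell out in detail.
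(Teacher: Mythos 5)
A point of order first: the paper itself contains no proof of this lemma. It is quoted verbatim from Lemma~2 of \cite{Ro} (as the surrounding text says, ``We restate this here and then use it to justify our claim'') and is simply applied with $X=B_{1}$, $Y=Y_{1}$, $s=1$. So there is no in-paper argument to compare yours against; I can only assess your proposal on its own terms. On those terms, your outline is the standard one and is essentially sound: the exact sequence of the pair with $H^{1}(Y;\mathbb{Z})=0$, the identification of $H^{2}(X,Y)\to H^{2}(X)$ with the adjoint of $Q_{X}$ on free parts (nondegenerate because $Y$ is a $\mathbb{Q}HS^{3}$), and the short exact sequence $0\to i^{*}(\mathrm{Tors}\,H^{2}(X))\to \mathrm{im}(i^{*})\to \mathrm{coker}(Q_{X})\to 0$ giving $|\mathrm{im}(i^{*})|=st$ are all correct.

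Two repairs are needed. First, your claim that ``no torsion of $H^{2}(X)$ is hit'' by $j^{*}$, so that $\mathrm{Tors}\,H^{2}(X)$ injects into $\mathrm{im}(i^{*})$, is false in general: $H_{2}(X;\mathbb{Z})$ may itself have torsion (isomorphic to $\mathrm{Tors}\,H_{1}(X,Y)$), and any homomorphism carries it into $\mathrm{Tors}\,H^{2}(X)$. This does not damage the count, precisely because the lemma defines $t$ as the order of the \emph{image} $i^{*}(\mathrm{Tors}\,H^{2}(X))\cong \mathrm{Tors}\,H^{2}(X)/(\mathrm{Tors}\,H^{2}(X)\cap\mathrm{im}\,j^{*})$, but the injectivity assertion should be dropped. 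Second, and more seriously, the step $|\mathrm{coker}(i^{*})|=t$ is the actual content of the lemma and your linking-form route is in danger of circularity: the statement that the annihilator of $\mathrm{im}(i^{*})$ equals $i^{*}(\mathrm{Tors}\,H^{2}(X))$ has order-count exactly equivalent to $h=st^{2}$, so you cannot invoke it without an independent proof of the reverse inclusion. The clean finish is the one you gesture at in passing: by the duality between the cohomology and homology sequences of the pair, $\mathrm{coker}(i^{*})\cong\mathrm{im}(\delta)\cong\mathrm{im}\bigl(i_{*}:H_{1}(Y)\to H_{1}(X)\bigr)$, and by naturality of the universal coefficient theorem the map $i_{*}:H_{1}(Y)\to\mathrm{Tors}\,H_{1}(X)$ is Pontryagin dual to $i^{*}:\mathrm{Tors}\,H^{2}(X)\to H^{2}(Y)$; dual maps of finite abelian groups have images of equal order, so $|\mathrm{coker}(i^{*})|=t$ and $h=(st)\cdot t$. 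With those two points fixed, the argument is complete.
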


\noindent Applying this for $X=B_{1}$ and $Y=Y_{1}$ and setting
$s=1$ since $b_{2}(B_{1})=0$ gives that
$9=|H_{1}(Y_{1};\mathbb{Z})|=t^{2}$ and so $t=3$, i.e. the order of
the image of the torsion of $H^{2}(B_{1};\mathbb{Z})$ in
$H^{2}(Y_{1};\mathbb{Z})$ is 3.

The arguments in the preceding two paragraphs verify the answer we
got using the enhanced intersection form.

\section{Rational blow-down along $Y_{2}=\Sigma(\overline{9_{46}})$}

Denote the branched double cover of $S^{3}$ along
$\overline{9_{46}}$ by $Y_{2}=\Sigma(\overline{9_{46}})$. Following
a process analogous to that of subsection 1.1, we construct a
negative-definite 4-manifold $W_{2}$ with $\partial(W_{2})=Y_{2}$.
This is depicted
below: \\

\begin{center}
\setlength{\unitlength}{3cm}
          \begin{picture}(2.6,.6)
          \put(.6,.6){\circle*{.05}}
          \put(.4,.45){$-2$}
          \put(1.1,.6){\circle*{.05}}
          \put(.9,.45){$-2$}
          \put(1.6,.6){\circle*{.05}}
          \put(1.4,.45){$-2$}
          \put(2.1,.6){\circle*{.05}}
          \put(2,.45){$-2$}
          \put(2.6,.6){\circle*{.05}}
          \put(2.5,.45){$-2$}
          \put(1.6,.1){\circle*{.05}}
          \put(1.4,0){$-3$}
          \put(.6,.6){\line(1,0){2}}
          \put(1.6,.1){\line(0,1){.5}}
          \put(0,.5){$W_{2}=$}
          \end{picture}
\end{center}

Using the algorithm presented in subsection 1.4, we compute that
only 9 of the 96 characteristic vectors $K_{0} \in \{0,2\} \times
\{0,2\} \times \{0,2\} \times \{0,2\} \times \{0,2\} \times
\{-1,1,3\}$ initiate paths that terminate in a vector $L$ satisfying

\begin{equation}\label{16.1'}
-2 \leq \langle L,v_{i} \rangle \leq 0 \mbox{ } \forall \mbox{ } i
\in \{1,2,3,4,5\}
\end{equation}
\begin{equation}\label{16.2'}
-3 \leq  \langle L,v_{6} \rangle \leq 1
\end{equation}

\noindent where $v_{1},v_{2},v_{3},v_{4},v_{5}$ are the vertices
with multiplicity -2 of the graph above enumerated from left to
right and $v_{6}$ is the bottom vertex of the same graph. These
vectors are $(0,0,0,0,0,\pm 1),$ $(0,0,0,0,0,3),$ $(0,0,0,0,2,-1)$,
$(2,0,0,0,0,-1),$ $(0,0,0,2,0,-1),$ $(0,2,0,0,0,-1),$
$(2,0,0,0,0,1),$ $(0,0,0,0,2,1)$. Since
$|H_{1}(Y_{2};\mathbb{Z})|=det(9_{46})=9$, we deduce that $Y_{2}$ is
an L-space.

\begin{remark}
In a recent paper (\cite{MO}), C. Manolescu and P. Ozsv\'{a}th  show
that all but 2 ($8_{19}$ and $9_{42}$)of the 85 prime knots with up
to nine crossings are quasi-alternating, which implies that the
corresponding branched double covers of $S^{3}$ are L-spaces.
\end{remark}

Lastly, we know that $Y_{2}$ bounds a rational homology ball since
$\overline{9_{46}}$ is slice and thus we can move on to write a
blow-down formula along $Y_{2}$.

To this end, we compute the squares of the above 9 vectors. It turns
out that 5 of them have square -6, thus giving $d=0$ for the
corresponding $spin^{c}$ structures. These are the vectors
$3v_{6}=(0,0,0,0,0,3)$, $2v_{4}-v_{6}=(0,0,0,2,0,-1)$,
$2v_{5}-v_{6}=(0,0,0,0,2,1)$, $2v_{2}-v_{6}=(0,2,0,0,0,-1)$ and
$2v_{1}-v_{6}=(2,0,0,0,0,1)$. This comes as no surprise, if we take
into account the symmetry of $Y_{2}$ obvious from the plumbing
diagram of $W_{2}$ above (consider the triads
$\{3v_{6},2v_{4}-v_{6},2v_{5}-v_{6}\}$ and
$\{3v_{6},2v_{2}-v_{6},2v_{1}-v_{6}\}$) and for these $spin^{c}$
structures, one can write down a blow-down formula analogous to that
of Proposition \ref{XB1}.

\begin{remark}
\label{r:B2}
An alternative description of $Y_{2}$ is given in
Figure \ref{8}.

\begin{figure}[h]
\centering
\includegraphics{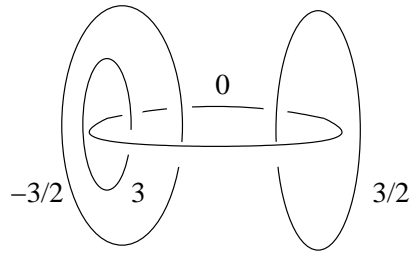}
\caption{An alternative description of $Y_{2}$} \label{8}
\end{figure}

\noindent Using this, we find that $Y_{2}$ is the manifold
$(3,3,3;9)$ of category (3) of the main theorem in \cite{CH} with
$p=q=3$ and $s=0$ and we can compute that the 2-handle addition
shown in Figure \ref{9} yields one way to construct a rational
homology ball bounded by $Y_{2}$.
\begin{figure}[h]
\centering
\includegraphics{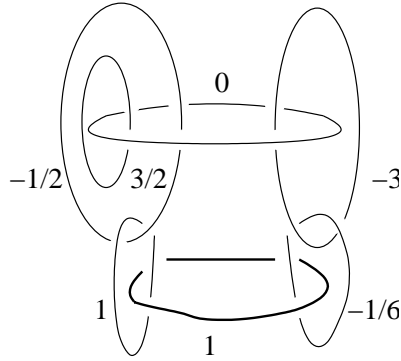}
\caption{2-handle addition for $Y_{2}$} \label{9}
\end{figure}
However, we will not need to make use of this in order to write down
the blow-down formula in this case.
\end{remark}

\section{Rational blow-down along $Y_{3}=\Sigma(\overline{10_{137}})$}

$W_{3}$ below is a negative-definite 4-manifold with $\partial
W_{3}=Y_{3}$.

\begin{center}
\setlength{\unitlength}{3cm}
          \begin{picture}(2.1,1.1)
          \put(.6,1.1){\circle*{.05}}
          \put(.4,.95){$-2$}
          \put(1.1,1.1){\circle*{.05}}
          \put(.9,.95){$-2$}
          \put(1.6,1.1){\circle*{.05}}
          \put(1.5,.95){$-3$}
          \put(2.1,1.1){\circle*{.05}}
          \put(2,.95){$-2$}
          \put(1.1,.6){\circle*{.05}}
          \put(.9,.5){$-2$}
          \put(1.1,.1){\circle*{.05}}
          \put(.9,0){$-3$}
          \put(.6,1.1){\line(1,0){1.5}}
          \put(1.1,.1){\line(0,1){1}}
          \put(0,1){$W_{3}=$}
          \end{picture}
\end{center}

$Y_{3}$ is an L-space, since
$|H_{1}(Y_{3};\mathbb{Z})|=det(10_{137})=25$ and out of the 144
characteristic vectors $K_{0} \in \{0,2\} \times \{0,2\} \times
\{-1,1,3\} \times \{0,2\} \times \{0,2\} \times \{-1,1,3\}$ exactly
25 initiate paths ending in a vector L satisfying

\begin{equation}\label{16.1''}
-2 \leq \langle L,v_{i} \rangle \leq 0 \mbox{ } \forall \mbox{ } i
\in \{1,2,4,5\}
\end{equation}
\begin{equation}\label{16.2''}
-3 \leq  \langle L,v_{i} \rangle \leq 1 \mbox{ } \forall \mbox{ } i
\in \{3,6\}
\end{equation}

\noindent with $v_{1}$, $v_{2}$, $v_{3}$, $v_{4}$ the vertices on
the horizontal part of the graph from left to right and $v_{5}$,
$v_{6}$ the remaining two vertices on the vertical part from top to
bottom. We list these vectors here: \\
 $(0,0,\pm 1,0,0,\pm 1)$, $(0,0,\pm 1,0,0,3)$, $(0,0,\pm 1,2,0,\pm 1)$, $(0,0,-1,2,0,3)$, \\
 $(0,0,-1,0,2,\pm 1)$, $(0,0,1,2,0,-1)$, $(0,2,-1,0,0,\pm 1)$, $(2,0,\pm 1, 0,0 \pm1)$, \\
 $(2,0,-1,2,0,\pm 1)$, $(0,0,3,0,0,\pm 1)$, $(0,0,-1,2,2,-1)$.

Finally, $\overline{10_{137}}$ is slice and therefore $Y_{3}$ bounds
a rational homology ball.

We compute the squares of the above listed vectors and it turns out
that precisely 5 ($=\sqrt{25}$) of them have square equal to -6.
These are $(0,0,1,0,0,3),$ $(0,0,-1,0,2,1),$ $(0,0,1,2,0,1),$
$(0,0,3,0,0,-1),$ $(0,0,-1,2,2,-1)$ and according to arguments
presented in section 3.5, these give the $spin^{c}$ structures that
extend to the rational homology ball.

For them, we can write a blow-down formula similar to that of
Proposition \ref{XB1}.

\begin{remark}
\label{r:B3}
$Y_{3}$ is also among the manifolds listed in
\cite{CH}, in particular it is the manifold (2,5,5;25) in category
(5) with p=5 and s=-1.

\begin{figure}[h]
\centering
\includegraphics{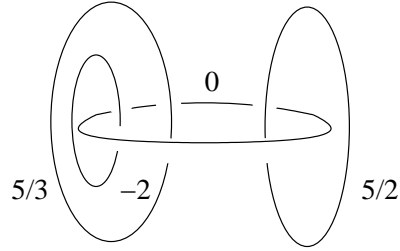}
\caption{The manifold $Y_{3}$} \label{10}
\end{figure}

Figures \ref{10} and \ref{11} present the manifold $Y_{3}$ and a
surgery that leads to the construction of a rational homology ball
$B_{3}$ with $\partial(B_{3})=Y_{3}$ respectively.

\begin{figure}[h]
\centering
\includegraphics{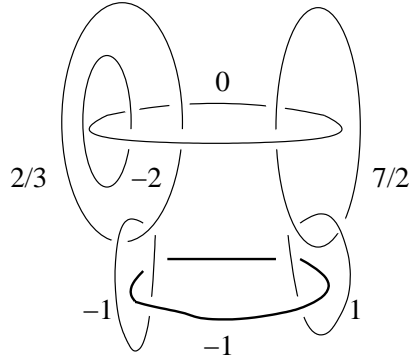}
\caption{2-handle addition for $Y_{3}$} \label{11}
\end{figure}

The enhanced intersection form in this case is given by:

\[
A_{3}=
\begin{pmatrix}
-2 & 1 & 0 & 0 & 0 & 0 & 0\\
1 & -2 & 1 & 0 & 1 & 0 & 0 \\
0 & 1 & -3 & 1 & 0 & 0 & -1 \\
0 & 0 & 1 & -2 & 0 & 0 & -1 \\
0 & 1 & 0 & 0 & -2 & 1 & 0 \\
0 & 0 & 0 & 0 & 1 & -3 & -2 \\
0 & 0 & -1 & -1 & 0 & -2 & -5
\end{pmatrix}
\]

\noindent with $Ker(A_{3})=<(-1,-2,-\frac{7}{5}, -\frac{6}{5},
-\frac{8}{5},-\frac{6}{5},1)>.$

From our list of 25 vectors satisfying (\ref{16.1''}) and
(\ref{16.2''}) only 5 are orthogonal to $Ker(A_{3})$, the same 5
that have square equal to -6. This verifies the conclusions of the
first part of our exposition on blowing-down along $Y_{3}$.

\end{remark}

\section{Rational blow-down along $Y_{4}=\Sigma(\overline{10_{140}})$}

A negative-definite 4-manifold $W_{4}$ with $\partial W_{4}=Y_{4}$
is depicted in the next picture.

\begin{center}
 \setlength{\unitlength}{3cm}
          \begin{picture}(3.1,.6)
          \put(.6,.6){\circle*{.05}}
          \put(.4,.45){$-2$}
          \put(1.1,.6){\circle*{.05}}
          \put(.9,.45){$-2$}
          \put(1.6,.6){\circle*{.05}}
          \put(1.4,.45){$-2$}
          \put(2.1,.6){\circle*{.05}}
          \put(2,.45){$-2$}
          \put(2.6,.6){\circle*{.05}}
          \put(2.5,.45){$-2$}
          \put(3.1,.6){\circle*{.05}}
          \put(3,.45){$-2$}
          \put(1.6,.1){\circle*{.05}}
          \put(1.4,0){$-3$}
          \put(.6,.6){\line(1,0){2.5}}
          \put(1.6,.1){\line(0,1){.5}}
          \put(0,.5){$W_{4}=$}
          \end{picture}
 \end{center}

Label the vertices of the graph above as $v_{1},...,v_{7}$ starting
with those with multiplicity -2 from left to right and finishing at
the vertex with multiplicity -3. Among the 192 characteristic
vectors $K_{0} \in \{0,2\} \times \{0,2\} \times \{0,2\} \times
\{0,2\} \times \{0,2\} \times \{0,2\} \times \{-1,1,3\}$ 9 initiate
paths ending in a vector L satisfying
\begin{equation}\label{16.1'''}
-2 \leq \langle L,v_{i} \rangle \leq 0 \mbox{ } \forall \mbox{ } i
\in \{1,...,6\}
\end{equation}
\begin{equation}\label{16.2'''}
-3 \leq  \langle L,v_{7} \rangle \leq 1
\end{equation}
They are the vectors $(0,0,0,0,0,0,\pm1 )$, $(0,0,0,0,0,0,3)$,
$(2,0,0,0,0,0,\pm 1)$, $(0,2,0,0,0,0,-1)$, $(0,0,0,0,2,0,-1)$,
$(0,0,0,0,0,2,\pm 1)$. Since
$|H_{1}(Y_{4};\mathbb{Z})|=det(10_{140})=9$, we conclude that
$Y_{4}$ is an L-space.

Once again, this particularly nice structure of our 3-manifold $Y$
allows us to write a blow-down formula along it.
($\overline{10_{140}}$ being slice guarantees the existence of a
$\mathbb{Q}HB^{4}$ with boundary $Y_{4}$.) To study which of the
$spin^{c}$ structures on $Y_{4}$ extend to the $\mathbb{Q}HB^{4}$,
we can compute $d$ for the 9 vectors on our list. $(0,0,0,0,0,0,3)$,
$(2,0,0,0,0,0,1)$, $(0,2,0,0,0,0,-1)$ are the only ones with square
equal to -7 and for the three $spin^{c}$ structures corresponding to
them we can write the blow-down formula.

\begin{remark}
\label{r:B4}
$Y_{4}$ is the manifold (3,3,4;9) in the notation of
Casson and Harer in \cite{CH}. It belongs to category (3) with p=3,
q=4 and s=0. Figures \ref{12} and \ref{13} suggest how to construct
a rational homology ball $B_{4}$ with $\partial B_{4}=Y_{4}$.

\begin{figure}[h]
\centering
\includegraphics{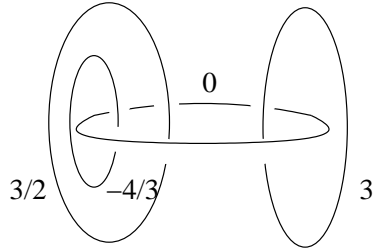}
\caption{The manifold $Y_{4}$} \label{12}
\end{figure}

\begin{figure}[h]
\centering
\includegraphics{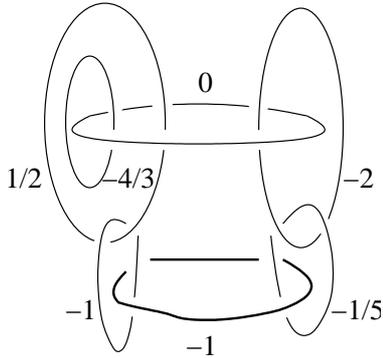}
\caption{2-handle addition for $Y_{4}$} \label{13}
\end{figure}
\end{remark}

\section{The knots $10_{129}$, $10_{153}$ and $10_{155}$.}

The cases of the remaining three knots among the seven listed in the
introduction (i.e. $10_{129}$, $10_{153}$ and $10_{155}$) are still
to be studied, since the techniques used in this paper are
inconclusive for these examples.

\section{Addendum}

In this last section, we discuss some conclusions regarding our
constructions that can be drawn from the relationship between
Heegaard-Floer homology and Khovanov homology.

\subsection{Background in Khovanov homology}
In \cite{Kho}, M. Khovanov presented an algorithm that computes an
invariant of knots and links. Given a link $L$, this invariant is a
bigraded homology theory $Kh(L)$ (strictly speaking cohomology
theory, since the boundary map increases the homological grading by
1) that categorifies the Jones polynomial, in the sense that its
graded Euler characteristic is the unnormalized Jones polynomial of
the link:
\[
\sum_{i,j \in
\mathbb{Z}}(-1)^{i}q^{j}dim(Kh^{i,j}(L))=\widehat{J}(L).
\]
We confine our presentation here to mentioning that the starting
point in defining $Kh(L)$ is to use the state-sum expression for the
unnormalized Jones polynomial $\widehat{J}$.

In addition to the homology groups $Kh^{i,j}(L)$, \textit{reduced
homology} groups $\widetilde{Kh}^{i,j}(L)$ can be defined by
tensoring the original chain complex with $Q$, where $Q=A/XA$ is the
one-dimensional representation of the base ring $A$, to obtain a
reduced chain complex. The Euler characteristic of $\widetilde{Kh}$
is the Jones polynomial:
\[
\sum_{i,j \in
\mathbb{Z}}(-1)^{i}q^{j}dim(\widetilde{Kh}^{i,j}(L))=J(L).
\]

For our purposes, we are interested in the category of H-thin knots:
\begin{definition}
A knot K is called \textbf{\textit{homologically thin}} or
\textbf{\textit{H-thin}} if its nontrivial groups $Kh^{i,j}(K)$ lie
on two adjacent diagonals. By a diagonal we mean a line $2i-j=k$,
for some $k$.
\end{definition}
\noindent As it turns out (\cite{BN}), all but 12 of the 249 knots
with at most 10 crossings are H-thin and for these knots the
homology groups are supported on the diagonals $j-2i=\sigma \pm 1$,
where $\sigma$ denotes the signature of the knot. Moreover, both the
Jones and the Alexander polynomials are alternating and the groups
$\widetilde{Kh}^{i,j}(L)$ lie on one diagonal. Consequently, for
these knots the dimensions of $\widetilde{Kh}^{i,j}(L)$ are given by
the absolute values of the coefficients of $J(L)$ (\cite{Kho2}).

\subsection{The conclusions}
We make the following observations concerning the mirror image
$\overline{K}$ of a slice, H-thin knot $K$. According to \cite{Kho},
for $K$ oriented knot and integers $i,j$, there are equalities of
isomorphism classes of abelian groups
\begin{equation}\label{K mirror}
Kh^{i,j}(\overline{K}) \otimes \mathbb{Q} = Kh^{-i,-j}(K) \otimes
\mathbb{Q}.
\end{equation}
\begin{equation}\label{K mirror 2}
Tor(Kh^{i,j}(\overline{K}))=Tor(Kh^{1-i,-j}(K))
\end{equation}
We saw in the previous section that for an H-thin knot $K$ the
Khovanov homology is supported in the diagonals $j-2i=\sigma+1$ and
$j-2i=\sigma-1$. If $K$ is also slice, then it has signature
$\sigma=0$ and so the Khovanov homology of an H-thin and slice knot
$K$ is supported in the diagonals $j-2i=1$ and $j-2i=-1$. From
equation (\ref{K mirror}) it follows that the non-torsion part of
$Kh(\overline{K})$ is also supported in the same diagonals. From
equation (\ref{K mirror 2}) it follows that the torsion part of
$Kh(K)$ in the $j-2i=-1$ diagonal gives a torsion part in the same
diagonal for $Kh(\overline{K})$ while the torsion part of $Kh(K)$ in
the $j-2i=1$ diagonal gives a torsion part in the line $j-2i=-3$ for
$Kh(\overline{K})$.

For our examples of knots, that is $8_{20},$  $9_{46},$ $10_{137}$
and $10_{140}$, we read off from the knot table (\cite{DBN}) that
these knots are H-thin and that torsion appears only on the
$j-2i=-1$ diagonal and thus we can conclude that the mirror knots
$\overline{8_{20}},$ $\overline{9_{46}},$ $\overline{10_{137}}$ and
$\overline{10_{140}}$ are also H-thin.

\begin{proposition}
\label{Z2Lspace}
 $Y_{i}$ is an L-space over $\mathbb{Z}_{2}$, $i \in
\{1,2,3,4\}$.
\end{proposition}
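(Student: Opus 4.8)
The plan is to mirror the strategy already used over $\mathbb{Q}$, but now feeding in the relationship between Heegaard--Floer homology and Khovanov homology to control the $\mathbb{Z}_2$-coefficient groups. The key input is the spectral sequence of Ozsv\'ath--Szab\'o whose $E_2$-page is the reduced Khovanov homology $\widetilde{Kh}(\overline{L})$ (with $\mathbb{Z}_2$ coefficients) and which converges to $\widehat{HF}(\Sigma(L);\mathbb{Z}_2)$. Running this spectral sequence gives, for every link $L$, the inequality $\operatorname{rk}_{\mathbb{Z}_2}\widehat{HF}(\Sigma(L);\mathbb{Z}_2)\le \operatorname{rk}_{\mathbb{Z}_2}\widetilde{Kh}(\overline{L};\mathbb{Z}_2)$. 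On the other hand, one always has the lower bound $\operatorname{rk}_{\mathbb{Z}_2}\widehat{HF}(Y;\mathbb{Z}_2)\ge |H_1(Y;\mathbb{Z})|$ for any rational homology sphere $Y$, with equality exactly when $Y$ is an $\mathbb{Z}_2$-L-space. So it suffices to show that for each of the four knots $K\in\{8_{20},9_{46},10_{137},10_{140}\}$ one has $\operatorname{rk}_{\mathbb{Z}_2}\widetilde{Kh}(K;\mathbb{Z}_2)=\det(K)$ (note $\Sigma(\overline{K})=\Sigma(K)$ up to orientation, so it does not matter whether we take $K$ or its mirror as the branch link here — what matters is the rank of reduced Khovanov homology, which is mirror-insensitive in rank).

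First I would record that each of the four knots $K$ is H-thin (this was just established in the paragraph preceding the Proposition), so its reduced Khovanov homology $\widetilde{Kh}^{i,j}(K)$ is supported on the single diagonal $j-2i=\sigma=0$ and, over $\mathbb{Z}$, is free. As noted in the Addendum, for an H-thin knot the dimensions of $\widetilde{Kh}^{i,j}(K;\mathbb{Q})$ are the absolute values of the coefficients of the (normalized) Jones polynomial $J(K)$, so $\operatorname{rk}\widetilde{Kh}(K;\mathbb{Q})=\sum_i |a_i|$ where $J(K)=\sum_i a_i q^i$. Next I would invoke that $\det(K)=|J(K)(-1)|$ together with the fact that for an \emph{alternating} Jones polynomial (which H-thin knots have) the coefficients $a_i$ alternate in sign along the single supporting diagonal, so $|J(K)(-1)|=\sum_i|a_i|$. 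Hence $\operatorname{rk}\widetilde{Kh}(K;\mathbb{Q})=\det(K)$. Because $\widetilde{Kh}(K;\mathbb{Z})$ is torsion-free for H-thin slice $K$ (all torsion sits on the $j-2i=\pm1$ diagonals of the \emph{unreduced} theory and the reduced theory of a thin knot is free), the universal coefficient theorem gives $\operatorname{rk}_{\mathbb{Z}_2}\widetilde{Kh}(K;\mathbb{Z}_2)=\operatorname{rk}\widetilde{Kh}(K;\mathbb{Q})=\det(K)$.

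Putting the pieces together: for $Y_i=\Sigma(\overline{K})$ with $K$ the $i$-th knot, the chain of inequalities
\[
|H_1(Y_i;\mathbb{Z})| \;\le\; \operatorname{rk}_{\mathbb{Z}_2}\widehat{HF}(Y_i;\mathbb{Z}_2) \;\le\; \operatorname{rk}_{\mathbb{Z}_2}\widetilde{Kh}(K;\mathbb{Z}_2) \;=\; \det(K) \;=\; |H_1(Y_i;\mathbb{Z})|
\]
forces equality throughout, which is exactly the statement that $Y_i$ is an L-space over $\mathbb{Z}_2$. I would then remark that this recovers, over $\mathbb{Z}_2$, the L-space conclusions obtained earlier over $\mathbb{Q}$ for $Y_1,\dots,Y_4$ without appealing to quasi-alternation or to the plumbing calculus of \cite{OS1}.

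The main obstacle I anticipate is purely bookkeeping rather than conceptual: one must be careful about the convention for the Khovanov spectral sequence (whether it converges to $\widehat{HF}$ of the double cover of $L$ or of its mirror, and whether one uses reduced or unreduced Khovanov homology on the $E_2$-page), since getting this backwards would break the rank inequality. One must also verify honestly that the reduced integral Khovanov homology of these four specific knots is torsion-free — this is read off the Knot Atlas tables, but it is the one place where a genuine (if routine) appeal to tabulated data is unavoidable, and it should be stated as such.
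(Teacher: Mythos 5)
Your proposal is correct and takes essentially the same route as the paper's own proof: the identical chain $|H_{1}(Y_{i};\mathbb{Z})| = |Spin^{c}(Y_{i})| \leq rk(\widehat{HF}(Y_{i};\mathbb{Z}_{2})) \leq rk(\widetilde{Kh}(K;\mathbb{Z}_{2})) = \det(K)$, using the Ozsv\'ath--Szab\'o spectral sequence from reduced Khovanov homology and the H-thinness of the four knots to compute the Khovanov rank from the Jones polynomial. Your extra care about the alternation of Jones coefficients and the absence of torsion in the reduced theory only makes explicit steps the paper leaves implicit.
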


\begin{proof}
We start by studying $Y_{1}=\Sigma(\overline{8_{20}})$. We know that
\[
9=det(\overline{8_{20}})= |H_{1}(\Sigma(\overline{8_{20}}))| =
|H^{2}(\Sigma(\overline{8_{20}}))|.
\]
Also,
\[
|H^{2}(\Sigma(\overline{8_{20}}))| =
|Spin^{c}(\Sigma(\overline{8_{20}}))|
\]
because there is an isomorphism between
$H^{2}(\Sigma(\overline{8_{20}}))$ and
$Spin^{c}(\Sigma(\overline{8_{20}}))$ (see \cite{GS} for a
discussion on this) and
\[
|Spin^{c}(\Sigma(\overline{8_{20}}))| \leq
rk(\widehat{HF}(\Sigma(\overline{8_{20}})))
\]
because $b_{1}(\Sigma(\overline{8_{20}}))=0$ and
$|Spin^{c}(\Sigma(\overline{8_{20}}))|$ gives the Euler
characteristic of $\widehat{HF}(\Sigma(\overline{8_{20}}))$
according to Proposition 5.1 of \cite{OS7}. Furthermore,
\[
rk(\widehat{HF}(\Sigma(\overline{8_{20}}))) \leq
rk(\widetilde{Kh}(\overline{8_{20}}))
\]
\noindent (where both ranks refer to homology with $\mathbb{Z}_{2}$
coefficients) as there exists a spectral sequence with $E^{2}$ term
$\widetilde{Kh}(8_{20})$ with $\mathbb{Z}_{2}$ coefficients and
$E^{\infty}$ term
$\widehat{HF}(\Sigma(\overline{8_{20}});\mathbb{Z}_{2})$
(\cite{OS2}) and $rk(\widetilde{Kh}(\overline{8_{20}})) =
rk(\widetilde{Kh}(8_{20}))$. Lastly, according to Corollary 2 of
\cite{Kho2} the dimensions of the reduced homology groups for an
H-thin knot are given by the absolute values of the coefficients of
it's Jones polynomial and
\[
J(8_{20})=-q+2-q^{-1}+2q^{-2}-q^{-3}+q^{-4}-q^{-5}
\]
giving that
\[rk(\widetilde{Kh}(\overline{8_{20}})) =
rk(\widetilde{Kh}(8_{20})) = 9.
\]
\noindent Combining all the above relations in the order presented,
we get that
\[
rk(\widehat{HF}(\Sigma(\overline{8_{20}});\mathbb{Z}_{2}))=|Spin^{c}(\Sigma(\overline{8_{20}}))|=9
\]
which translates to the fact that $ \Sigma(\overline{8_{20}}) $ is
an L-space over $\mathbb{Z}_{2}$.

Similarly, $det(9_{46})=9=rk(\widetilde{Kh}(9_{46}))$,
$det(10_{137})=25=rk(\widetilde{Kh}(10_{137}))$,
$det(10_{140})=9=rk(\widetilde{Kh}(10_{140}))$ and the analogous
conclusions can be drawn for these examples.
\end{proof}

\begin{figure}[h]
\centering
\includegraphics{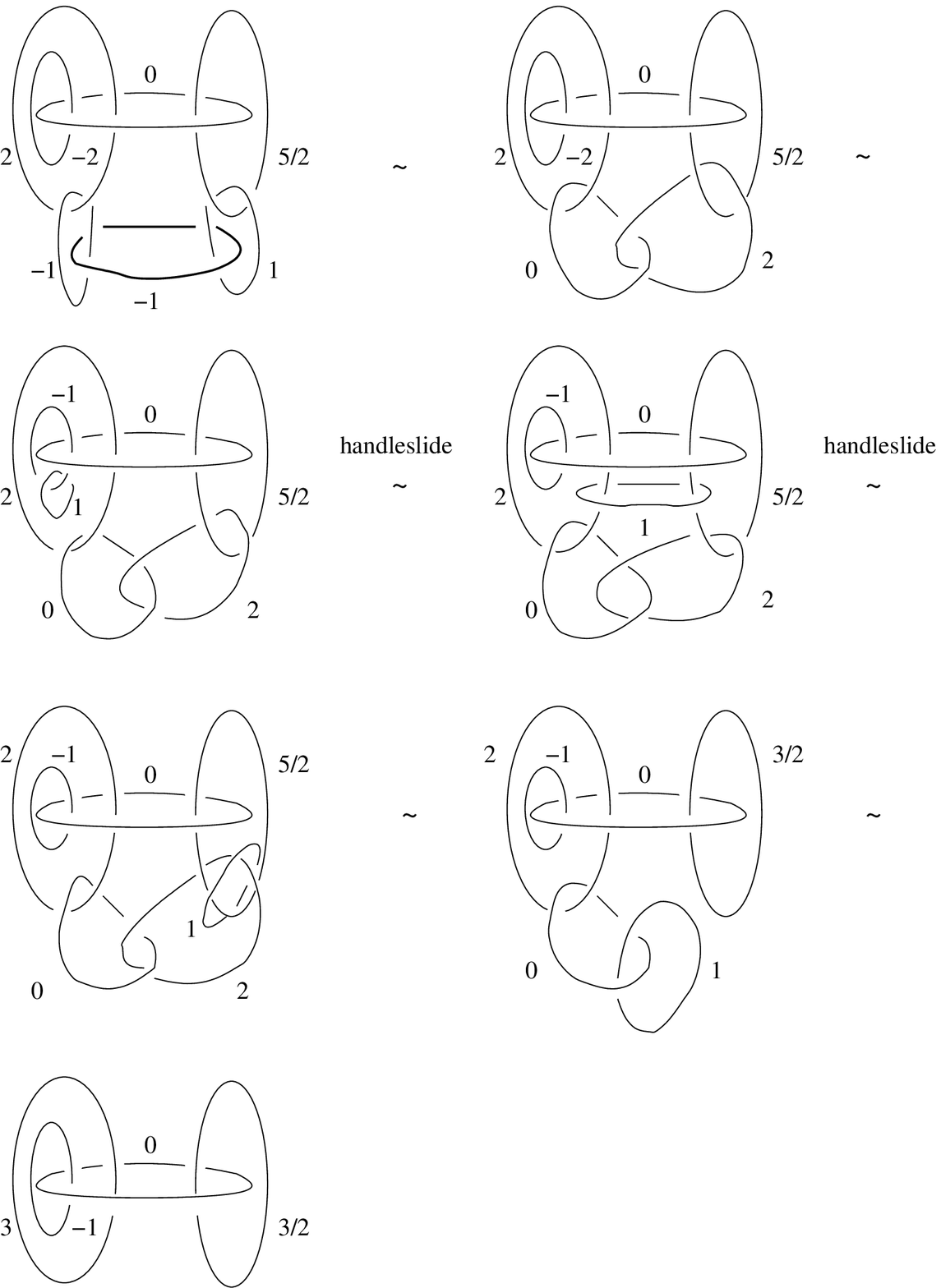}
\caption{2-handle addition for $Y_{1}$} \label{14}
\end{figure}

\end{document}